\newtheorem{thm}{THEOREM}[section]
\newtheorem{lem}[thm]{LEMMA}
\newtheorem{cor}[thm]{COROLLARY}
\theoremstyle{definition}
\newcommand{\N}{\mathbb N}
\newcommand{\Z}{\mathbb Z}
\newcommand{\Q}{\mathbb Q}
\newcommand{\m}{\mathfrak{m}}
\newcommand{\p}{\mathfrak{p}}
\def\al{\alpha}
\def\om{\omega}
\def\ol{\overline}
\newcounter{cs}
\newcommand{\casos}{\begin{itemize}}
\newcommand{\fcasos}{\end{itemize}\setcounter{cs}{1}}
\newfont{\tit}{cmr12 scaled \magstep3}
\begin{document}

\title {On the Integral Closedness of $R[\al]$}

\author{Abdulaziz Deajim}
\address{{\it Department of Mathematics, King Khalid University, P.O. Box 9004, Abha, Saudi Arabia}} \email{deajim@kku.edu.sa, deajim@gmail.com}
\author{Lhoussain El Fadil}
\address{{\it Department of Mathematics, Faculty of Sciences Dhar-Mahraz,  B.P. 1796, Fes, Morocco}} \email{lhouelfadil2@gmail.com}

\keywords{Number field, Dedekind's criterion, extension of valuation}
\subjclass[2010]{11Y40, 11S05, 13A18}
\date{\today}

\begin {abstract}
Let $R$ be a Dedekind ring, $K$ its quotient field, and $L=K(\al)$ a finite field extension of $K$ defined by a monic irreducible polynomial $f(x)\in R[x]$. We give an easy version of Dedekind's criterion which computationally improves those versions know in the literature. We further use this result to give a sufficient condition for the integral closedness of $R[\al]$ when $f(x)=x^n-a$. In case $R$ is a ring of integers of a number field, we give yet sufficient and necessary conditions for this to hold, generalizing and improving in both cases some known results in this direction. Some highlighting examples are also given.

\end {abstract}
\maketitle

\section{{\bf INTRODUCTION AND STATEMENTS OF MAIN RESULTS}}\label{intro}

For a complex number $\al$ integral over $\mathbb{Q}$, a criterion that tests the integral closedness of $\mathbb{Z}[\al]$ in the number field $\mathbb{Q}(\al)$ was given in the milestone paper \cite{Ded} of R. Dedekind (see also \cite{Cohen} or almost any book in algebraic number theory for a more modern treatment). As is well known, Dedekind's criterion utilizes the irreducible factorization of the reduction modulo prime integres of the minimal polynomial of $\al$. S. Khanduja and M. Kummar, in \cite{KK2}, gave a generalization of this criterion to extensions of Dedekind rings. Ershov, in \cite{Er}, gave yet a generalization of this criterion to extensions of rings of valuation. This criterion had, and still have, important applications in many relevant areas such as (but not limited to) the study of prime ideal factorizations in Dedekind rings, the computation of discriminants of number fields, and the existence of integral power bases in extensions of Dedekind rings (see for instance \cite{Cha-Dea}, \cite{KK1}, \cite{Sch}, and \cite{Uch}).

Let $(K,\nu)$ be a valued field with $\nu$ a rank-one discrete valuation, $R_\nu$ the ring of valuation of $\nu$, $\m_\nu$ the maximal ideal of $R_\nu$, $\pi$ a generator of $\m_\nu$, and $k_\nu=R_\nu/\m_\nu$ the residue field of $\nu$. We assume, by normalization if necessary, that $\nu(K^*)=\Z$ (so, in particular, $\nu(\pi)=1$). Denote also by $\nu$ the Gaussian extension of $\nu$ to the ring $R_\nu[x]$. Let $F(x) \in R_\nu[x]$ be a monic irreducible polynomial, $L=K(\al)$ the extension field of $K$ generated by a root $\al$ of $F$, and $S_\nu$ the integral closure of $R_\nu$ in $L$. Assume that $\overline{F}(x) \equiv \prod_{i=1}^r \overline{\phi}_i^{l_i}(x) \; (\mbox{mod}\; \m_\nu)$ is the monic irreducible factorization of $\overline{F}$ in $k_\nu[x]$. For each $i=1, \dots, r$, let $\phi_i(x) \in R_\nu[x]$ be a monic lift of $\overline{\phi}_i(x)$, and $Q_i(x), R_i(x) \in R_\nu[x]$, respectively, the quotient and remainder upon the Euclidean division of $F(x)$ by $\phi_i(x)$. So $Q_i(x)$ is monic and either $R_i(x)=0$ or $\mbox{deg}(R_i(x))<\mbox{deg}(\phi_i(x))$.

Our first theorem (THEOREM \ref{DVR}) gives a precise and easy criterion for the integral closedness of the ring $R_\nu[\al]$ in $L$.

\begin{thm}\label{DVR}
With the above assumptions and notations, $R_\nu[\al]$ is integrally closed in $L$ if and only if, for each $i=1, \dots, r$, either $l_i=1$ or $\nu(R_i(x))=1$.
\end{thm}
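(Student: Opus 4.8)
The plan is to localize the problem at each maximal ideal of $R_\nu[\al]$ lying over $\m_\nu$ and reduce it to a cotangent-space (regularity) computation. Write $A=R_\nu[\al]\cong R_\nu[x]/(F)$, a one-dimensional Noetherian domain that is module-finite over the DVR $R_\nu$; its fraction field is $L$. Being integrally closed in $L$ is then equivalent to $A$ being a Dedekind domain, i.e. to $A_{\mathfrak P}$ being a DVR for every maximal ideal $\mathfrak P$. Since $A$ is integral over $R_\nu$, every maximal ideal lies over $\m_\nu$, and the maximal ideals of $A$ correspond to those of $A/\m_\nu A=k_\nu[x]/(\ol F)$, hence to the irreducible factors $\ol\phi_i$; explicitly $\mathfrak P_i=(\pi,\phi_i(\al))$, with residue field $\kappa_i:=k_\nu[x]/(\ol\phi_i)$. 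Thus it suffices to prove, for each $i$, that $A_{\mathfrak P_i}$ is a DVR if and only if $l_i=1$ or $\nu(R_i)=1$. I would first record the preliminary fact that $\nu(R_i)\ge 1$ always: reducing $F=\phi_i Q_i+R_i$ modulo $\m_\nu$ gives $\ol F=\ol\phi_i\,\ol{Q_i}+\ol{R_i}$, and since $\ol\phi_i\mid\ol F$ while $\deg\ol{R_i}<\deg\ol\phi_i$ we get $\ol{R_i}=0$, i.e. $\pi\mid R_i$ coefficientwise. Hence $R_i=\pi R_i'$ with $R_i'\in R_\nu[x]$ and $\deg R_i'<\deg\phi_i$, and the condition $\nu(R_i)=1$ is exactly $\ol{R_i'}\neq 0$ in $k_\nu[x]$.

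The heart of the argument is a cotangent-space computation. Let $\mathfrak q_i=(\pi,\phi_i)$ be the preimage of $\mathfrak P_i$ in $R_\nu[x]$; it is a maximal ideal with quotient $\kappa_i$, and the local ring $R_\nu[x]_{\mathfrak q_i}$ is regular of dimension $2$ with regular system of parameters $\pi,\phi_i$, so $\mathfrak q_i/\mathfrak q_i^2\cong \kappa_i\,\ol\pi\oplus\kappa_i\,\ol{\phi_i}$ is two-dimensional over $\kappa_i$. Passing to $A=R_\nu[x]/(F)$, one has $\mathfrak P_i/\mathfrak P_i^2\cong\mathfrak q_i/(\mathfrak q_i^2+(F))\cong(\mathfrak q_i/\mathfrak q_i^2)/\kappa_i\,\ol F$, the last step because $F\in\mathfrak q_i$ forces $F\cdot\mathfrak q_i\subseteq\mathfrak q_i^2$, so the image of $(F)$ in $\mathfrak q_i/\mathfrak q_i^2$ is the $\kappa_i$-line spanned by $\ol F$. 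Since $\mathfrak P_i/\mathfrak P_i^2$ is annihilated by $\mathfrak P_i$, it agrees with the cotangent space of $A_{\mathfrak P_i}$; therefore $A_{\mathfrak P_i}$ is regular (equivalently a DVR) precisely when $\dim_{\kappa_i}\mathfrak P_i/\mathfrak P_i^2=1$, i.e. when $\ol F\neq 0$ in $\mathfrak q_i/\mathfrak q_i^2$, i.e. when $F\notin(\pi^2,\pi\phi_i,\phi_i^2)$.

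Finally I would read off the two coordinates of $\ol F$. Using $F=\pi R_i'+\phi_i Q_i$, one finds modulo $\mathfrak q_i^2$ that
\[
\ol F = c_i\,\ol\pi + d_i\,\ol{\phi_i},\qquad c_i=(R_i'\bmod\mathfrak q_i),\quad d_i=(Q_i\bmod\mathfrak q_i)\in\kappa_i.
\]
Because $\deg R_i'<\deg\phi_i$, the class $c_i$ is simply $\ol{R_i'}$, so $c_i=0\iff\ol{R_i'}=0\iff\nu(R_i)\ge 2$. For $d_i$, the identity $\ol F=\ol\phi_i\,\ol{Q_i}$ together with $\ol F=\prod_j\ol\phi_j^{l_j}$ gives $\ol{Q_i}=\ol\phi_i^{\,l_i-1}\prod_{j\neq i}\ol\phi_j^{l_j}$, whence $d_i=\ol{Q_i}\bmod\ol\phi_i=0\iff l_i\ge 2$. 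Consequently $\ol F=0$ in $\mathfrak q_i/\mathfrak q_i^2$ if and only if both $l_i\ge 2$ and $\nu(R_i)\ge 2$, so $A_{\mathfrak P_i}$ is a DVR if and only if $l_i=1$ or $\nu(R_i)=1$. Taking the conjunction over $i$ yields the theorem.

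The step I expect to need the most care is the core of the second paragraph: establishing that $\mathfrak q_i/\mathfrak q_i^2$ is free of rank $2$ over $\kappa_i$, which rests on the regularity of $R_\nu[x]_{\mathfrak q_i}$ and on $\pi,\phi_i$ being a regular system of parameters there, and then correctly translating the global membership $F\in(\pi^2,\pi\phi_i,\phi_i^2)$ into the vanishing of the cotangent class $\ol F$ (the identification $\mathfrak P_i/\mathfrak P_i^2\cong(\mathfrak q_i/\mathfrak q_i^2)/\kappa_i\ol F$ and the equality of global and local cotangent spaces). Once this linear-algebra framework is in place, the extraction of the coordinates $c_i$ and $d_i$, and hence the criterion, is a direct reading-off.
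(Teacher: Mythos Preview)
Your proof is correct and follows a genuinely different route from the paper's. The paper argues valuation-theoretically: it proves a preparatory lemma describing the extensions $\omega$ of $\nu$ to $L$ and the values $\omega(\phi_i(\al))$, then for the ``only if'' direction exhibits an explicit integral element $\theta_k=Q_k(\al)/\pi\in S_\nu\setminus R_\nu[\al]$ when some $l_k\ge 2$ and $\nu(R_k)\ge 2$, and for the ``if'' direction shows $\omega(\phi_i(\al))=1/l_i$ and derives a contradiction from a hypothetical $p(\al)/\pi\in S_\nu\setminus R_\nu[\al]$. Your approach instead recasts integral closedness as regularity of the one-dimensional Noetherian domain $A=R_\nu[x]/(F)$ at each maximal ideal $\mathfrak P_i$, and computes the cotangent space $\mathfrak P_i/\mathfrak P_i^2$ via the regular presentation $R_\nu[x]_{\mathfrak q_i}\twoheadrightarrow A_{\mathfrak P_i}$. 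What your method buys is brevity and conceptual clarity: no analysis of valuations on $L$ is needed, and the criterion drops out of a two-dimensional linear-algebra computation (essentially a Jacobian-type test). What the paper's method buys is constructiveness and information beyond the statement: it produces an explicit witness $Q_k(\al)/\pi$ to the failure of integral closedness, and its computation of $\omega(\phi_i(\al))=1/l_i$ is used later in the paper (in the Remark following the proof) to show the criterion is independent of the chosen monic lifts $\phi_i$. Your identification of the delicate step is accurate; the regularity of $R_\nu[x]$ (as a polynomial ring over a DVR) and the fact that $\mathfrak q_i=(\pi,\phi_i)$ is maximal of height $2$ make $\pi,\phi_i$ a regular system of parameters, after which the rest is indeed a direct reading-off.
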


For the next result, let $R$ be a Dedekind ring, $K$ its fraction field, $\p$ a nonzero prime ideal of $R$, $\nu_\p$ the (rank-one) discrete valuation of $R$ associated to $\p$, $\displaystyle{F(x) \in R[x]}$ a monic irreducible polynomial, $L=K(\alpha)$ an extension field of $K$ generated by a root $\alpha$ of $F$, $S$ the integral closure of $R$ in $L$, and $k_\p$ the residue field $R/\p$. Keep the same notations and assumptions as above for the factorization of the reduction of $F$ modulo $\p$. 
The following result can be deduced from THEOREM \ref{DVR}, which dramatically and computationally improves Dedekind's criterion in Dedekind ring extensions (see \cite{Er} and \cite{KK2} for instance).


\begin{cor}\label{Ded}
Keep the assumptions and notations of the paragraph above. Then, $S=R[\alpha]$ if and only if, for every prime ideal $\p$ of $R$ whose square divides $\mbox{Disc}_R(\al)$ and for each $i=1, \cdots, r$, either $l_i=1$ or $\nu_\p(R_i(x))=1$.
\end{cor}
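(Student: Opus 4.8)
The plan is to deduce the corollary from THEOREM \ref{DVR} by localizing $R$ at each of its nonzero primes. First I would use the elementary fact that a module vanishes precisely when all of its localizations at maximal ideals vanish: since $R[\al]\subseteq S$, we have $S=R[\al]$ if and only if $S/R[\al]=0$, that is, if and only if $\big(S/R[\al]\big)_\p=0$, equivalently $S_\p=R_\p[\al]$, for every nonzero prime $\p$ of $R$ (these being exactly the maximal ideals in the Dedekind ring $R$). Because $R$ is Dedekind, each localization $R_\p$ is a discrete valuation ring whose associated valuation is precisely $\nu_\p$, and since the formation of the integral closure commutes with localization, $S_\p$ is exactly the integral closure of $R_\p$ in $L$. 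Thus $S_\p=R_\p[\al]$ is equivalent to $R_\p[\al]$ being integrally closed in $L$, which is the hypothesis tested by THEOREM \ref{DVR}.

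Next I would apply THEOREM \ref{DVR} to the valued field $(K,\nu_\p)$ for each such $\p$. The residue field of $R_\p$ is $R_\p/\p R_\p\cong R/\p=k_\p$, so the monic irreducible factorization of $\ol{F}$ over $k_\p$ is the same whether read off from $R[x]$ or from $R_\p[x]$; the lifts $\phi_i$, the quotients $Q_i$, and the remainders $R_i$ may all be chosen in $R[x]$, and the Gaussian valuation $\nu_\p(R_i(x))$ is unchanged under the inclusion $R[x]\subseteq R_\p[x]$. Hence THEOREM \ref{DVR} yields, for each fixed $\p$, that $R_\p[\al]=S_\p$ if and only if for each $i=1,\dots,r$ either $l_i=1$ or $\nu_\p(R_i(x))=1$. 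Combining this with the localization principle of the first paragraph shows that $S=R[\al]$ if and only if this condition holds at \emph{every} nonzero prime $\p$ of $R$.

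It then remains to cut the range of $\p$ down from all nonzero primes to only those with $\p^2\mid\mbox{Disc}_R(\al)$. The key step here is the discriminant--index relation, for which I may assume $L/K$ separable (so that $\mbox{Disc}_R(\al)\neq 0$, the case in which the divisibility condition is not vacuous): localizing at $\p$, the ring $R_\p$ is a principal ideal domain, so $S_\p$ is free of rank $n=[L:K]$ over $R_\p$ while $R_\p[\al]$ is free on the power basis $1,\al,\dots,\al^{n-1}$, and comparing the two bases gives
\[
\nu_\p\big(\mbox{Disc}_R(\al)\big)=2\,\nu_\p\big([S_\p:R_\p[\al]]\big)+\nu_\p\big(\mathfrak{d}_{S/R}\big),
\]
where $\mathfrak{d}_{S/R}$ denotes the relative discriminant. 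Consequently, whenever $R_\p[\al]\neq S_\p$ the index $[S_\p:R_\p[\al]]$ is a proper ideal, so $\nu_\p([S_\p:R_\p[\al]])\geq 1$ and therefore $\nu_\p(\mbox{Disc}_R(\al))\geq 2$, i.e.\ $\p^2\mid\mbox{Disc}_R(\al)$. Equivalently, for every prime $\p$ with $\p^2\nmid\mbox{Disc}_R(\al)$ one automatically has $R_\p[\al]=S_\p$, so by THEOREM \ref{DVR} the local condition is vacuously satisfied at such primes and they may be discarded. This gives exactly the asserted equivalence.

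The main obstacle, and the only place requiring genuine care, is the interface between the global ring $S$ and its localizations: one must justify that $S_\p$ really is the integral closure of $R_\p$ in $L$ and that the factorization and Gaussian-valuation data feeding THEOREM \ref{DVR} are insensitive to localization, and one must set up the discriminant--index formula cleanly over the DVR $R_\p$. All three are standard in the Dedekind setting, but together they are precisely what legitimizes the reduction from all primes to the finitely many whose square divides the discriminant.
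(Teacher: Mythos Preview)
Your proposal is correct and follows essentially the same route as the paper: reduce to localizations $R_\p$, apply THEOREM~\ref{DVR} there, and use the discriminant--index relation to discard primes whose square does not divide $\mbox{Disc}_R(\al)$. The paper simply cites \cite{Cha-Dea} and \cite{Cha-Dea 2} for the localization principle and the formula $\mbox{Disc}_R(F)=\mbox{Ind}_R(\al)^2\,\mbox{D}_R(S)$, whereas you sketch both arguments directly, but the logical structure is identical.
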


In \cite[Theorem 3.1]{HNHA}, it was shown that if $\al$ is a complex
root of an irreducible  polynomial $x^n-m\in \Z[x]$ such that $m$
is square free and every prime divisor of $n$ divides $m$, then
$\Z[\al]$ is integrally closed in $\Q(\al)$. In the following theorem, we give yet an easy new proof of a generalization of the aforementioned result. Note that by saying that an element $a$ of a Dedekind ring $R$ is square-free, we mean that the principal ideal $aR$ is not divisible by the square of any prime ideal of $R$.

\begin{thm}\label{Ded 2}
Let $R$ be a Dedekind ring, $K$ its quotient field, $a\in R$ square-free such that $\displaystyle{f(x)=x^n-a}$ is irreducible over $R$, and $\al$ a root of $f(x)$.
If every prime ideal of $R$ that contains $n.1_K$ also contains $a$, then $R[\al]$ is integrally closed.
\end{thm}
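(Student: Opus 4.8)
The plan is to derive Theorem \ref{Ded 2} as a direct application of Corollary \ref{Ded}. Since $S$ is by definition the integral closure of $R$ in $L=K(\al)$, asserting that $R[\al]$ is integrally closed is exactly the statement $S=R[\al]$, so it suffices to verify the numerical condition of Corollary \ref{Ded}. Rather than first computing $\mbox{Disc}_R(\al)$ and isolating the primes whose square divides it, I would check the logically stronger statement that for \emph{every} nonzero prime ideal $\p$ of $R$ and each $i$, either $l_i=1$ or $\nu_\p(R_i(x))=1$; this certainly implies the hypothesis of the corollary and sidesteps the discriminant computation entirely.

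Next I would split according to whether $\p$ divides $a$. If $\p\nmid a$, then $\ol a\neq 0$ in $k_\p$, and the contrapositive of the hypothesis (every prime containing $n\cdot 1_K$ contains $a$) gives $\p\nmid n$, so that $n$ is invertible in $k_\p$. Then $\ol f=x^n-\ol a$ is coprime to its derivative $nx^{n-1}$, since the only root of the latter is $0$ while $\ol f(0)=-\ol a\neq 0$; hence $\ol f$ is separable and all its monic irreducible factors occur with exponent $l_i=1$. The required condition therefore holds vacuously at every such $\p$.

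If instead $\p\mid a$, then $\ol a=0$ and $\ol f=x^n$, so $r=1$, $\ol\phi_1=x$, and $l_1=n$. Taking the monic lift $\phi_1(x)=x$ and performing the Euclidean division $x^n-a=x\cdot x^{n-1}+(-a)$ shows that the remainder is the constant $R_1(x)=-a$. Because $a$ is square-free and $\p\mid a$, we have $\nu_\p(a)=1$, and since the Gaussian valuation of a constant polynomial is the valuation of that constant, $\nu_\p(R_1(x))=\nu_\p(a)=1$. Thus the condition holds at every prime dividing $a$ as well, and Corollary \ref{Ded} yields $S=R[\al]$, as desired.

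The argument is short, so the delicate points are bookkeeping rather than a single hard obstacle. The step deserving the most care is the case $\p\mid a$: one must confirm that $-a$ is genuinely the remainder of $f$ modulo $\phi_1=x$ and that square-freeness of $a$ forces $\nu_\p(a)=1$ \emph{exactly} (not merely $\nu_\p(a)\geq 1$), which is what pins the valuation of the remainder to the required value $1$. The hypothesis relating the primes above $n$ to those above $a$ is used precisely to exclude the otherwise dangerous situation $\p\mid n$ with $\p\nmid a$, in which $\ol f$ could acquire repeated non-linear factors and the criterion might fail.
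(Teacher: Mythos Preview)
Your argument is correct and follows essentially the same route as the paper: both proofs apply Corollary~\ref{Ded}, reduce to primes $\p\mid a$, and verify there that the remainder upon division by $x$ is $-a$ with $\nu_\p(-a)=1$ by square-freeness. The only cosmetic difference is how the primes with $\p\nmid a$ are disposed of: the paper computes $\mathrm{disc}(f)=\pm n^n a^{n-1}$ and observes that the hypothesis forces every prime containing the discriminant to contain $a$, whereas you argue directly that $\ol f$ is separable modulo such $\p$ since $\p\nmid n$; these are equivalent ways of saying no repeated factors occur there.
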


In the case of rings of integers of number fields, the following theorem strongly enhances THEOREM \ref{Ded 2}. Besides, THEOREM \ref{Ded 3} generalizes the relevant results in \cite{KK1} and \cite{Uch}. For a ring of integers $R$, by $\nu_\p(s)$ we mean $\nu_\p(sR)$ for $s\in R$ and a nonzero prime ideal $\p$ of $R$.

\begin{thm}\label{Ded 3}
Let $R$ be the ring of integers of a number field $K$ and
$\displaystyle{L=K(\al)}$ be defined by a root of an irreducible polynomial
$\displaystyle{f(x)=x^n- u \in R[x]}$. Then  $R[\al]$ is integrally closed if and
only if, for every nonzero prime ideal $\p$ of $R$, either of the following holds:

1. $\nu_\p(u)=1$, or

2. $\nu_\p(u)=0$ and $\nu_\p(u^{p^f}-u)=1$, where $p$ is the rational prime lying under $\p$ and $f$ is the residue degree of $\p$ over
$p$.
\end{thm}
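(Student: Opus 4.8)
The plan is to prove the equivalence prime by prime through COROLLARY \ref{Ded}. Since $\mathrm{Disc}_R(\al)=\pm\,n^{n}u^{\,n-1}$, that corollary tells us $S=R[\al]$ if and only if, for every prime $\p$ whose square divides $n^{n}u^{\,n-1}$ and for each factor, $l_i=1$ or $\nu_\p(R_i(x))=1$; such $\p$ necessarily divide $n\cdot u$, while at all remaining primes every multiplicity $l_i$ equals $1$ and nothing must be checked. So I would first record the factorization type of $\ol f=x^{n}-\ol u$ over $k_\p$ in each relevant case and then read off the condition supplied by THEOREM \ref{DVR}. The two genuinely distinct cases are $\p\mid u$ and ($\p\nmid u$, $p\mid n$).

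The case $\p\mid u$ is immediate. Here $\ol f=x^{n}$, so $r=1$, $\phi_1(x)=x$ and $l_1=n$, and the remainder of $x^{n}-u$ upon division by $x$ is the constant $R_1(x)=-u$, whence $\nu_\p(R_1(x))=\nu_\p(u)$. For $n\ge 2$ the criterion therefore demands $\nu_\p(u)=1$, which is exactly alternative~(1); note that $\nu_\p(u)\ge 1$ here, so alternative~(2) cannot occur.

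The substantive case is $\p\nmid u$ with $p\mid n$. Writing $n=p^{s}m$ with $p\nmid m$ and $s\ge 1$, and using $\mathrm{char}\,k_\p=p$, I get $\ol f=(x^{m}-\ol w)^{p^{s}}$, where $\ol w\in k_\p$ is the unique element with $\ol w^{\,p^{s}}=\ol u$; since $p\nmid m$ and $\ol w\neq 0$, the polynomial $x^{m}-\ol w$ is separable, so each irreducible factor $\ol\phi_i$ occurs with $l_i=p^{s}\ge 2$, and every one must satisfy $\nu_\p(R_i(x))=1$. To evaluate $\nu_\p(R_i(x))$ I would let $\theta$ be a root of $\phi_i$ in the unramified extension of the completion of $(K,\nu_\p)$ with residue field $k_\p(\ol\theta)$. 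Because $1,\ol\theta,\dots,\ol\theta^{\deg\phi_i-1}$ is a $k_\p$-basis of that residue field and $\deg R_i<\deg\phi_i$, the reduction of $\pi^{-\nu_\p(R_i)}R_i(\theta)$ is a nonzero $k_\p$-combination of this basis; hence there is no cancellation and $\nu_\p(R_i(x))=\nu(R_i(\theta))=\nu(\theta^{n}-u)$, the value of $f(\theta)$.

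It remains to compute $\nu(\theta^{n}-u)$ modulo valuation $2$. The elementary lemma I will use is that $\nu(a-b)\ge 1$ forces $\nu(a^{p^{s}}-b^{p^{s}})\ge 2$, which holds because $p\in\p$ and every $\binom{p^{s}}{j}$ with $0<j<p^{s}$ is divisible by $p$. Applying it to $\theta^{m}\equiv w\ (\mathrm{mod}\ \p)$ gives $\theta^{n}=(\theta^{m})^{p^{s}}\equiv w^{p^{s}}$ modulo valuation $2$; taking the lift $w=u^{p^{c}}$ with $c\equiv -s\ (\mathrm{mod}\ f)$ yields $w^{p^{s}}=u^{q^{t}}$ for some $t\ge 1$, where $q=p^{f}$, and a short telescoping argument — using the same lemma to get $u^{q^{j}}\equiv u^{q^{j-1}}$ modulo valuation $2$ for $j\ge 2$ — collapses $u^{q^{t}}$ to $u^{q}$. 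Thus $\theta^{n}-u\equiv u^{q}-u$ modulo valuation $2$ for every $i$ simultaneously, so the common condition $\nu_\p(R_i(x))=1$ is equivalent to $\nu_\p(u^{p^{f}}-u)=1$, i.e.\ to alternative~(2); combined with $\nu_\p(u)=0$ this is precisely condition~(2), and assembling the three cases gives the equivalence. The main obstacle is exactly this last case: identifying the Gaussian valuation $\nu_\p(R_i(x))$ with $\nu(\theta^{n}-u)$ and then pushing the congruence down to $u^{p^{f}}-u$ uniformly in $i$; by comparison the case $\p\mid u$ and the reduction to primes dividing the discriminant are routine.
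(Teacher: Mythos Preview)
Your proof is correct and reaches the same conclusion, but the route you take in the main case $\p\nmid u$, $p\mid n$ is genuinely different from the paper's. The paper works entirely with explicit polynomial identities: writing $n=mp^{r}$, it splits into the two subcases $f\le r$ and $r<f$ and in each uses the binomial expansion to exhibit $f(x)$ as a multiple of an auxiliary polynomial (either $x^{mp^{s}}-u$ or $x^{m}-u^{p^{s}}$) plus the constant $u^{p^{f}}-u$; the remainder upon division by every $\phi_i$ is then literally that constant, and THEOREM~\ref{DVR} applies. You instead pass to a root $\theta$ of $\phi_i$ in the unramified extension, identify $\nu_\p(R_i(x))$ with $\nu(f(\theta))$ via the integral-basis argument, and then reduce $\nu(\theta^{n}-u)$ to $\nu_\p(u^{p^{f}}-u)$ by $p$th-power congruences and a telescoping step.

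What each buys: the paper's argument is entirely elementary polynomial algebra over $R$ and never leaves $R[x]$, at the cost of an ad hoc case split on $f$ versus $r$ and two parallel binomial computations. Your argument is more conceptual and uniform (no case split, and the same $\theta$-computation handles every $\phi_i$ at once), but it imports a little local machinery --- Hensel's lemma to produce $\theta$, and the fact that $1,\theta,\dots,\theta^{\deg\phi_i-1}$ is an integral basis of the unramified extension, which is what makes the identification $\nu_\p(R_i(x))=\nu(R_i(\theta))$ work. Both approaches tacitly restrict attention to primes $\p$ dividing $nu$; you note this explicitly, and it matches the paper's reduction via the discriminant.
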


If we let $R=\Z$ and $K=\Q$ in THEOREM \ref{Ded 3}, then \cite[Thoerem 1.3]{KK1} can be phrased as follows: $\Z[\al]$ is integrally closed if and only if, for every rational prime $p$, either $\nu_p(u)=0$ or $\nu_p(u)=1$ and $\nu_p(u^{p^{\nu_p(n)}}-u)=1$. The following corollary is an improvement of \cite[Theorem 1.3]{KK1}.

\begin{cor}\label{Z}
Keep the assumptions of THEOREM \ref{Ded 3} with $R=\Z$ and $K=\Q$. Then $\Z[\al]$ is integrally closed if and only if, for every rational prime $p$, either of the following holds:

1. $\nu_p(u)=1$, or

2. $\nu_p(u)=0$ and $\nu_p(u^p-u)=1$.
\end{cor}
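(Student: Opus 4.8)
The plan is to read off COROLLARY \ref{Z} as the special case $R=\Z$, $K=\Q$ of THEOREM \ref{Ded 3}, so that no genuinely new argument is required: all of the analytic content has already been supplied by the theorem. The hypotheses transfer verbatim, since $f(x)=x^n-u$ is still irreducible over $\Q$ and $\al$ is one of its roots. I therefore only need to translate the prime-by-prime condition of THEOREM \ref{Ded 3} into the language of rational primes.

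The key point is purely arithmetic in $\Z$. Every nonzero prime ideal of $\Z$ is of the form $\p=p\Z$ for a unique rational prime $p$, and under the convention $\nu_p(s)=\nu_p(sR)$ fixed before THEOREM \ref{Ded 3} the valuation $\nu_\p$ is exactly the $p$-adic valuation. Moreover the rational prime lying under $p\Z$ is $p$ itself, and the residue field is $\Z/p\Z=\F_p$, whose degree over $\F_p$ is $f=1$; hence $p^f=p$. Substituting $f=1$ into THEOREM \ref{Ded 3}, its condition 2, namely $\nu_\p(u)=0$ together with $\nu_\p(u^{p^f}-u)=1$, becomes precisely $\nu_p(u)=0$ and $\nu_p(u^p-u)=1$, while its condition 1 is already $\nu_\p(u)=1$. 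This is exactly the stated criterion, which completes the derivation.

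In this reduction there is essentially no obstacle to overcome, as the whole weight sits in THEOREM \ref{Ded 3}; the only facts invoked are the elementary ones that a rational prime has residue degree $1$ over itself and that the prime below $p\Z$ is $p$. It is worth recording, however, where the improvement over the restated \cite[Theorem 1.3]{KK1} comes from: that criterion tests $\nu_p(u^{p^{\nu_p(n)}}-u)$, whereas here the exponent is simply $p$. For $p\nmid u$ the two tests agree, because a direct binomial expansion starting from $u^p\equiv u\ (\mathrm{mod}\ p)$ gives $u^{p^k}\equiv u^{p}\pmod{p^2}$ for every $k\ge 1$, so that $u^{p^k}-u$ and $u^p-u$ are congruent modulo $p^2$ and therefore $\nu_p(u^{p^k}-u)=1$ holds if and only if $\nu_p(u^p-u)=1$. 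Thus the criterion dispenses entirely with the computation of $\nu_p(n)$.
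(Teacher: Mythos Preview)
Your proof is correct and matches the paper's own approach exactly: apply THEOREM \ref{Ded 3} and observe that for $R=\Z$ the residue degree is $f=1$, so $p^f=p$. Your additional remark explaining why the criterion agrees with that of \cite{KK1} (via $u^{p^k}\equiv u^p\pmod{p^2}$) is a nice bonus not present in the paper's one-line proof.
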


\section{{\bf PROOFS OF THE MAIN RESULTS}}

In the notation of THEOREM \ref{DVR}, denote by $\om$ a valuation of $L$ extending $\nu$, by $S_\om$ the valuation ring of $\om$, and by $M_\om$ the maximal ideal of $S_\om$. Note that $S_\nu = \cap_{\om} S_\om$, where the intersection runs over all valuations $\om$ of $L$ extending $\nu$ (see \cite[Lemma 3.17]{Jan}).

We first tackle the following interesting lemma.

\begin{lem}\label{lemma}
Keep the assumptions and notations of THEOREM \ref{DVR}.

(i) For every $1\leq i \leq r$, there exists a valuation $\om$ of $L$ extending $\nu$ such that $\om(\phi_i(\al))>0$.

(ii) For every valuation $\om$ of $L$ extending $\nu$, there exists a unique $1\leq i \leq r$ such that $\om(\phi_i(\al))>0$ and $\om(\phi_j(\al))=0$ for all $j\neq i$.

(iii) For every valuation $\om$ of $L$ extending $\nu$ and every nonzero $p(x)\in R_\nu[x]$, $\om(p(\al))\geq \nu(p(x))$, where equality holds if and only if $\overline{\phi_i}(x)$ does not divide $\overline{(p(x)/\pi^{\nu(p(x))})}$ for some $\phi_i(x)$ satisfying $\om(\phi_i(\al))>0$.
\end{lem}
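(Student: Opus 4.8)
The plan is to build all three parts on one elementary observation: for a valuation $\om$ of $L$ extending $\nu$, the element $\al$ is integral over $R_\nu$, hence lies in the valuation ring $S_\om$, and the residue map $S_\om\to k_\om:=S_\om/M_\om$ restricts to $R_\nu\to k_\nu\hookrightarrow k_\om$ on coefficients. Writing $\ol\al$ for the residue of $\al$, it follows that for any $g(x)\in R_\nu[x]$ the residue of $g(\al)$ in $k_\om$ equals $\ol g(\ol\al)$; since $g(\al)\in S_\nu\subseteq S_\om$ we always have $\om(g(\al))\ge 0$, and $\om(g(\al))>0$ exactly when $\ol g(\ol\al)=0$. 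This dictionary converts all three assertions into statements about $\ol\al$ as a root of the $\ol\phi_i$.

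I would prove (ii) first. Reducing $F(\al)=0$ gives $\ol F(\ol\al)=\prod_j\ol\phi_j(\ol\al)^{l_j}=0$ in the field $k_\om$, so $\ol\phi_i(\ol\al)=0$ for at least one $i$; equivalently $\om(\phi_i(\al))>0$. Uniqueness is immediate: distinct monic irreducibles $\ol\phi_i,\ol\phi_j$ are coprime in $k_\nu[x]$, so a B\'ezout relation evaluated at $\ol\al$ forbids a common root. For $j\ne i$ we then have $\ol\phi_j(\ol\al)\ne 0$, i.e. $\om(\phi_j(\al))=0$, which is the remaining claim of (ii).

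For (iii), set $m=\nu(p(x))$ and factor $p(x)=\pi^m q(x)$ with $q(x)\in R_\nu[x]$ and $\nu(q(x))=0$ (so $\ol q\ne 0$). Since $\om|_K=\nu$ we have $\om(\pi)=1$, whence $\om(p(\al))=m+\om(q(\al))\ge m=\nu(p(x))$, because $q(\al)\in S_\nu\subseteq S_\om$ forces $\om(q(\al))\ge 0$. Equality holds precisely when $\om(q(\al))=0$, i.e. when $\ol q(\ol\al)\ne 0$. Letting $i$ be the unique index of (ii) with $\om(\phi_i(\al))>0$, the residue $\ol\al$ is a root of the irreducible $\ol\phi_i$, which is therefore its minimal polynomial over $k_\nu$; hence $\ol q(\ol\al)=0$ if and only if $\ol\phi_i\mid\ol q=\ol{(p(x)/\pi^{\nu(p(x))})}$. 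This is exactly the stated equality condition.

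The real content, and the main obstacle, is (i): one must actually exhibit, for each prescribed factor $\ol\phi_i$, a valuation whose residual root lands on $\ol\phi_i$. Parts (ii)--(iii) never guarantee that a given $\ol\phi_i$ is hit. I would obtain this by passing to the completion $\hat K$ of $K$ at $\nu$. Since $\ol F=\prod_j\ol\phi_j^{l_j}$ with pairwise coprime factors, Hensel's lemma lifts this to a factorization $F=\prod_j G_j$ in $\hat R_\nu[x]$ with each $G_j$ monic and $\ol{G_j}=\ol\phi_j^{l_j}$. As $\deg G_i=l_i\deg\phi_i\ge 1$, the factor $G_i$ has a monic irreducible divisor $H\in\hat R_\nu[x]$, and $H$ corresponds, through the standard identification $L\otimes_K\hat K\cong\prod_s\hat K[x]/(F_s)$, to a valuation $\om$ of $L$ extending $\nu$ under which $\al$ maps to a root of $H$. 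Since $H\mid G_i$ gives $\ol H\mid\ol{G_i}=\ol\phi_i^{l_i}$, the residue $\ol\al$ is a root of $\ol\phi_i$, so $\om(\phi_i(\al))>0$, as required. Alternatively one can avoid completions: $N_{L/K}(\phi_i(\al))=\pm\mathrm{Res}(F,\phi_i)$ reduces to $\mathrm{Res}(\ol F,\ol\phi_i)=0$, so $\nu(N_{L/K}(\phi_i(\al)))>0$; feeding this into the norm formula $\nu(N_{L/K}(\gamma))=\sum_\om f_\om\, v_{L,\om}(\gamma)$, whose summands are nonnegative here, forces $\om(\phi_i(\al))>0$ for some $\om$. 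The delicate point to get right is the bijection between the extensions of $\nu$ and the $\hat K$-irreducible factors of $F$, together with the compatibility of each such $\om$ with the residue dictionary above; everything else is formal.
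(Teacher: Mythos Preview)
Your proof is correct and follows essentially the same route as the paper's: the residue-map ``dictionary'' you set up is exactly what the paper encodes via the homomorphism $\psi:k_\nu[x]\to S_\om/M_\om$ in part (iii), your B\'ezout argument for (ii) matches the paper's (though you order things more cleanly by proving (ii) independently rather than borrowing the first assertion of (iii)), and your primary argument for (i) via Hensel lifting over the completion and the factor--valuation correspondence is the same mechanism the paper invokes. The norm/resultant alternative you offer for (i) is a genuinely different and slightly more elementary observation not present in the paper.
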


\begin{proof}$\\$

(i) We know (see \cite[Proposition III.8.2]{Neu}) that the valuations $\om_1, \cdots, \om_t$ of $L$ extending $\nu$ are in one-to-one correspondence with the irreducible factors $F_1(x), \cdots, F_t(x)$ of $F(x)$ in $K_\nu[x]$, where $K_\nu$ is the separable closure of $K$ in the $\nu$-adic completion of $K$. Moreover, if $\overline{\nu}$ is the unique valuation extending $\nu$ to the algebraic closure of $K_\nu$, then for any $h(x)\in K[x]$ and every root $\al_j$ of $F_j(x)$, $\om_j(h(\al))=\overline{\nu}(h(\al_j))$. Now choose $j$ so that $\al=\al_j$. By Hensel's Lemma, there is some $i$ such that $\overline{F_j}(x)$ is a power of $\overline{\phi_i}(x)$ (say $\overline{\phi_i}(x)^{e_i}$) modulo $\m_\nu$. As $F_j(\al)=0$ and $\m_\nu[\al] \subset M_{\om_j}$, $\phi_i(\al)^{e_i}\in M_{\om_j}$. Thus $\om_j(\phi_i(\al))>0$ as claimed.

(ii) Let $\om$ be a valuation of $L$ extending $\nu$. Assume for the moment that the first assertion of part (iii) is true. Since $\prod_{i=1}^r \phi_i(\al)^{l_i} \equiv f(\al) \equiv 0 \; (\mbox{mod} \; M_\om)$, $\om(\prod_{i=1}^r \phi_i(\al)^{l_i}) >0$. So, $\om(\phi_i(\al)) >0$ for some $1 \leq i \leq r$. For any $j\neq i$, let $s_j(x), t_j(x) \in R_\nu[x]$ be such that $\overline{s_j}(x) \overline{\phi_i}(x) + \overline{t_j}(x)\overline{\phi}_j(x)\equiv 1 \;(\mbox{mod} \;\m_\nu)$. Then, $s_j(\al)\phi_i(\al) + t_j(\al)\phi_j(\al) = 1 +h(\al)$ for some $h(x) \in \m_\nu[x]$. As $\nu(h(x))>0$, it follows from the first assertion of part (iii) that $\om(h(\al))>0$ and, thus, $h(\al) \in M_\om$. Since $\phi_i(\al)\in M_\om$ (because $\om(\phi_i(\al))>0$) and $s_j(\al)\in R_\nu[\al]\subseteq S_\nu \subseteq S_\om$, $s_j(\al) \phi_i(\al))\in M_\om$.  Thus, $t_j(\al)\phi_j(\al) -1 \in M_\om$ and, therefore, $t_j(\al)\phi_j(\al)\in S_\om - M_\om$. So $\om(t_j(\al)\phi_j(\al))=0$ and, thus, $\om(\phi_j(\al)) =0$, and the uniqueness of $i$ such that $\om(\phi_i(\al))>0$ follows.

(iii) Let $\om$ be a valuation of $L$ extending $\nu$, $p(x)\in R_\nu[x]$ be nonzero, and set $p_1(x)=p(x)/\pi^u$, where $u=\nu(p(x))$. As $\nu(p_1(x))=0$, $p_1(x) \in R_\nu[x]$. Thus, $p_1(\al)\in S_\nu\subseteq S_\om$ and $\om(p(\al))=\om(\pi^u p_1(\al))=u+\om(p_1(\al)) \geq u$ as claimed. Now define the map $\psi: k_\nu[x] \to S_\om/M_\om$ by $\overline{p}(x) \mapsto p(\al)+ M_\om$. This is a well-defined map since $\m_\nu \subseteq M_\om$. It can also be checked that $\psi$ is a ring homomorphism. For a nonzero $p(x)\in R_\nu[x]$ and $p_1(x)=p(x)/\pi^u$ with $u=\nu(p(x))$, we have $\om(p(\al))=u +\om(p_1(\al))$. So, $\om(p(\al))=u$ if and only if $\om(p_1(\al))=0$ if and only if $p_1(\al)\in S_\om - M_\om$ if and only if $\overline{p_1}(x) \not\in \mbox{ker}\,\psi$. From part (ii), let $\phi_i(x)$ be such that $\om(\phi_i(\al))>0$. Then, $\phi_i(\al)\in M_\om$ and, thus, $\overline{\phi_i}(x) \in \mbox{ker}\,\psi$. Since $\mbox{ker}\,\psi$ is principal (as $k_\nu$ is a field) and $\overline{\phi_i}(x)$ is irreducible over $k_\nu$, $\mbox{ker}\,\psi$ is generated by $\overline{\phi_i}(x)$. It now follows that $\om(p(\al))=u$ if and only if $\overline{\phi_i}(x)$ does not divide $\overline{p_1}(x)$ as claimed.
\end{proof}

\begin{proof} ({\bf THEOREM \ref{DVR}})
We prove first that if $R_\nu[\al]$ is integrally closed in $L$, then $l_i=1$ or $\nu(R_i(x))=1$ for each $i=1, \dots, r$. Assume that there exists some $k \in \{1, \dots, r\}$ such that $l_k>1$ and $\nu(R_k(x))>1$. Set $$\theta_k = Q_k(\al)/\pi = -R_k(\al)/(\pi \phi_k(\al));$$ we show that $\theta_k$ is an element of $S_\nu-R_\nu[\al]$ and, thus, $R_\nu[\al]$ is not integrally closed. Since $Q_i(x)$ is monic, $\theta_k \not\in R_\nu[\al]$ as, otherwise, $1/\pi$ would be an element of $R_\nu$, which is absurd. To show that $\theta_k\in S_\nu$, we show that $\theta_k \in S_\om$ for each valuation $\om$ of $L$ extending $\nu$ (as $S_\nu = \cap_{\om} S_\om$). Let $\om$ be such a valuation. By LEMMA \ref{lemma} (ii), let $i\in \{1, \dots, r\}$ be such that $\om(\phi_i(\al))>0$ and $\om(\phi_j(\al)=0$ for all $j\neq i$. Note, by LEMMA \ref{lemma} (iii), that $\om(R_k(\al)) \geq \nu(R_k(x))>1$. If $k \neq i$, then $$\om(Q_k(\al)) = \om(\phi_k(\al)) + \om(Q_k(\al)) =\om(\phi_k(\al) Q_k(\al))=\om(R_k(\al))>1.$$ So, $\om(\theta_k)=\om(Q_k(\al))-1>0$. Thus, $\theta_k \in S_\om$ in this case. If $k=i$, we consider two possibilities. If $0<\om(\phi_k(\al)) \leq 1$, then
$$\om(\theta_k)=\om(R_k(\al))-\om(\pi)-\om(\phi_k(\al)) \geq 2-1-1=0.$$ So, $\theta_k \in S_\om$ in this case too.
If, on the other hand, $\om(\phi_k(\al)) >1$, let $q_k(x), \, r_k(x)\in R_\nu[x]$ be, respectively, the quotient and remainder upon the Euclidean division of $Q_k(x)$ by $\phi_k(x)$ with $q_k(x)$ monic. We now have $$\overline{F}(x)\equiv \overline{q_k}(x)\overline{\phi_k}^2(x)+\overline{r_k}(x)\overline{\phi_k}(x)+\overline{R_k}(x) \; (\mbox{mod}\, m_\nu).$$ Since $\overline{\phi_k}^2 (x)$ divides $\overline{F}(x)$ (as $l_k\geq 2$) and $\overline{R_k}(x)\equiv 0 \; (\mbox{mod}\, m_\nu)$, it follows that $\overline{\phi_k}^2(x)$ divides $\overline{r_k}(x) \overline{\phi_k}(x)$ and, hence, $\overline{\phi_k}(x)$ divides $\overline{r_k}(x)$. As $\mbox{deg}(\overline{r_k}(x)) < \mbox{deg}(\overline{\phi_k}(x))$, $\overline{r_k}(x) \equiv 0\; (\mbox{mod}\, m_\nu)$ and $\nu(r_k(x))\geq 1$. Now (using LEMMA \ref{lemma} (iii) in the third inequality below), we have
\begin{align*}
\om(Q_k(\al))&=\om(q_k(\al)\phi_k(\al) + r_k(\al))\\
& \geq \mbox{min}\{\om(q_k(\al))+\om(\phi_k(\al)), \om(r_k(\al))\}\\
&\geq \mbox{min}\{\om(\phi_k(\al)), \om(r_k(\al))\}\\
&\geq \mbox{min}\{\nu(\phi_k(x), \nu(r_k(x))\}\\
&\geq 1.
\end{align*}
Thus, $\om(\theta_k)=\om(Q_k(\al))-1 \geq 0$ and, hence, $\theta_k \in S_\om$ in this case as well.

For the converse, assume that for every $1\leq i \leq r$, either $l_i=1$ or $\nu(R_i(x))=1$. We proceed in three steps.

\underline{Step 1:} We show that if, for some $i$, $l_i=1$, then we can always assume that $\nu(R_i(x))=1$ too. Suppose that $\nu(R_i(x))>1$. Note that
$$F(x)=Q_i(x)\phi_i(x)+R_i(x)=Q_i(x)(\phi_i(x)+\pi)-\pi Q_i(x) + R_i(x).$$
Let $H_i(x), T_i(x)\in R_\nu[x]$ be such that $Q_i(x)=H_i(x) \phi_i(x) + T_i(x)$ with $\mbox{deg}(T_i(x)) < \mbox{deg}(\phi_i(x))$. Set $\phi_i^*(x)=\phi_i(x) + \pi$, $Q_i^*(x)=Q_i(x)-\pi H_i(x)$ and $R_i^*(x)= R_i(x)-\pi T_i(x)$. Then, $F(x)= Q_i^*(x) \phi_i^*(x)+R_i^*(x)$. Note that $Q_i^*(x)$ and $R_i^*(x)$ are, respectively, the quotient and remainder upon the Euclidean division of $F(x)$ by $\phi_i^*(x)$. Since $\nu(R_i(x)) >1$ and $\overline{T_i}(x)$ is nonzero (as $l_i=1$), it must follow that $\nu(R_i^*(x))=1$. So, up to replacing the lifting of $\overline{\phi_i}(x)$ by $\phi_i^*(x)$ instead of $\phi_i(x)$ if necessary, we can assume that $\nu(R_i(x))=1$ as claimed.

\underline{Step 2:} Based on Step 1, we can assume that $\nu(R_i(x))=1$ for every $1\leq i \leq r$. Let $\om$ be a valuation of $L$ extending $\nu$ and $i\in \{1, \cdots, r\}$, we show that if $\om(\phi_i(\al))>0$ then $\om(\phi_i(\al))=1/{l_i}$. If $\l_i=1$, then $\overline{\phi_i}(x)$ does not divide $\overline{Q_i}(x)$. So, by LEMMA \ref{lemma} (iii), $\om(Q_i(\al))=0$ and $\om(\phi_i(\al))=\om(Q_i(\al) \phi_i(\al))=\om(-R_i(\al))=\nu(R_i(x))=1=1/{l_i}$. If $l_i >1$, then set $$F(x)=G_i(x)\phi_i^{l_i}(x) + S_i(x) \phi_i(x) + R_i(x),$$ for some $G_i(x), S_i(x)\in R_\nu[x]$ with $\nu(G_i(x))=0$ and $\nu(S_i(x))>1$. It then follows that $\om(G_i(\al) \phi_i^{l_i}(\al))=\om(S_i(\al)\phi_i(\al)+R_i(\al))=1$. Thus, $\om(\phi_i^{l_i}(\al) =1$ and, therefore, $\om(\phi_i(\al))=1/{l_i}$.

\underline{Step 3:} Now assume that $R_\nu[\al]$ is not integrally closed. So, there exists some monic $p(x)\in R_\nu[x]$ with $\mbox{deg}(p(x)) < \mbox{deg}(F(x))$ such that $p(\al)/\pi$ is integral over $R_\nu$. Note then that $p(\al)/\pi \in S_\nu - R_\nu[\al]$. Let $r_i\geq 0$ be such that $\overline{\phi_i}^{r_i}(x)$ is the highest power of $\overline{\phi_i}(x)$ that divides $\overline{p}(x)$. Since $\mbox{deg}(p(x))<\mbox{deg}(F(x))$, $r_{i_0} < l_{i_0}$ for some $i_0\in \{1, \cdots, r\}$. Let $M_{i_0}(x), L_{i_0}(x) \in R_\nu[x]$ be the quotient and remainder upon the Euclidean division of $p(x)$ by $\phi_{i_0}(x)$. So $$p(x)=\phi_{i_0}^{i_0}(x)M_{i_0}(x)+L_{i_0}(x),$$ $\overline{\phi_{i_0}}(x)\nmid \overline{M_{i_0}}(x)$, and $\nu(L_{i_0}(x))\geq 0$. Since $p(x)$ and $\phi_{i_0}(x)$ are monic, $M_{i_0}(x)$ is monic and, therefore, $\nu(M_{i_0}(x))=0$. By LEMMA \ref{lemma} (i), let $\om$ be a valuation of $L$ extending $\nu$ such that $\om(\phi_{i_0}(\al))>0$. Then, by Step 2 above, $om(\phi_{i_0}(\al))=1/{l_{i_0}}$. Since $\overline{\phi_{i_0}}(x)\nmid \overline{M_{i_0}}(x)$ and $\nu(M_{i_0}(x))=0$, it follows from LEMMA \ref{lemma} (iii) that $\om(M_{i_0}(\al))=\nu(M_{i_0}(x))=0$. Now, as $r_{i_0}/l_{i_0} \neq \om(L_{i_0}(\al))$,
\begin{align*}
\om(p(\al))&=\mbox{min}\{\om(\phi_{i_0}^{r_{i_0}}(\al)M_{i_0}(\al)), \om(L_{i_0}(\al))\}\\
& =\mbox{min}\{r_{i_0}\om(\phi_{i_0}(\al)+M_{i_0}(\al)), \om(L_{i_0}(\al))\}\\
& = \mbox{min}\{r_{i_0}/l_{i_0}, \om(L_{i_0}(\al))\}\\
& \leq r_{i_0}/l_{i_0} <1.
\end{align*}
Thus, $\om(p(\al)/\pi)<0$ and, therefore, $p(\al)/\pi \not\in S_\nu$, a contradiction.
\end{proof}

\noindent{\it Remark.} Checking that $\nu(R_i(x))=1$ is needed only if $l_i \geq 2$. In this case, note that the requirement that $\nu(R_i(x))=1$ is independent of the choice of the monic lifting of the $\ol{\phi_i}(x)$. Indeed, if $l_i \geq 2$, then we show that $\nu(R_i(x))=1$ if and only if $\nu(r_i(x))=1$ for the remainder $r_i(x)$ upon the Euclidean division of $f(x)$ by any other monic lifting of $\ol{\phi_i}(x)$. Let $\nu(R_i(x))=1$, $P_i(x)=\phi_i(x)+\pi H(x)$ be another monic lifting of $\ol{\phi_i}(x)$, with $H(x)\in R_\nu[x]$. Let $q_i(x), r_i(x)\in R_\nu[x]$ be, respectively, the quotient and remainder upon the Euclidean division of $f(x)$ by $P_i(x)$. Let $\omega$ be a valuation of $L$ extending $\nu$ such that $\omega(\phi_i(\al))>0$. By the proof of THEOREM \ref{DVR}, $\omega(\phi_i(\al))=1/l_i$. We have $$f(x) =Q_i(x)\phi_i(x)+R_i(x)=q_i(x)P_i(x)+r_i(x).$$
As $\omega(\phi_i(\al))=1/l_i <1$ and $\omega(\pi H(\al)) \geq 1$, $\omega(P_i(\al))=\omega(\phi_i(\al)+\pi H(\al))=1/l_i$. Let $M_i(x), S_i(x) \in R_\nu[x]$ be, respectively, the quotient and remainder upon the Euclidean division of $f(x)$ by $P_i^{l_i}(x)$. Since $\ol{\phi_i}(x)$ does not divide $\ol{M_i}(x)$, $\omega(M_i(\al))=\nu(M_i(x))=0$ (by LEMMA \ref{lemma}) and, thus, $\omega(S_i(\al))=\omega(M_i(\al)P_i^{l_i}(\al))=1$. Note that $S_i(x)=N_i(x) P_i(x) + r_i(x)$ for some $N_i(x) \in \m[x]$. So, $\omega(r_i(\al))=\omega(S_i(\al))=1$ because $\omega(N_i(\al)P_i(\al))\geq 1+1/l_i > 1$. Since $\ol{\phi_i}(x)$ does not divide $\ol{r_i}(x)$, $\nu(r_i(x))=\omega(r_i(\al))=1$.

\begin{proof} ({\bf COROLLARY \ref{Ded}})
On the one hand, it is known that $R[\al]$ is integrally closed (i.e. $S=R[\al]$) if and only if $R_\p[\al]$ is integrally closed for every nonzero prime ideal $\p$ of $R$ (see \cite{Cha-Dea}). On the other hand, the generalized discriminant-index formula "$\displaystyle{\mbox{Disc}_R(F)=\mbox{Ind}_R(\al)^2 \mbox{D}_R(S)}$" was shown in \cite{Cha-Dea 2}. It is thus obvious that for the equality $S=R[\al]$ to hold, we need only to consider those prime ideals $\p$ of $R$ whose squares divide $\mbox{Disc}_R(F)$.
For such a prime ideal, $(K, \nu)$ is a valued field of rank 1 with discrete valuation $\nu_\p$ and ring of valuation $R_\p$. Applying THEOREM \ref{DVR} yields the desired conclusion.
\end{proof}


\begin{proof} ({\bf THEOREM \ref{Ded 2}})
In order to use COROLLARY \ref{Ded}, and since $\mbox{disc}(f)=\pm n^n a^{n-1}$, we need only to consider those prime ideals of $R$ containing $n.1_K$ or $a$. Since any prime ideal $\p$ that contains $\mbox{disc}(f)$ must contain $a$ (by our assumption on $n$), we let $\p$ be a prime ideal of $R$ containing $a$, Then $x^n-a\equiv x^n \,(\mbox{mod}\, \p)$. By the Euclidean division of $x^n-a$ by $x$, the remainder is $-a$. Since $a$ is square-free, $\nu_\p(-a)=1$. Thus, by COROLLARY \ref{Ded}, $R[\al]$ is integrally closed in $K(\al)$.
\end{proof}

\begin{proof}({\bf THEOREM \ref{Ded 3}})
It is known that $R[\al]$ is integrally closed in $L$ if and only if $R_\p[\al]$ is integrally closed in $L$ for every nonzero prime ideal $\p$ of $R$ that divides the the discriminant of $f(x)$. Since the discriminant of $f(x)$ is $n^nu^{n-1}$, we seek to show the integral closedness of $R_\p[\al]$ in $L$ for every nonzero prime ideal $\p$ of $R$ that contains $nu$. Let $\p$ be such a prime ideal. If $u\in \p$, then it follows from THEOREM \ref{Ded 2} that $R_\p[\al]$ is integrally closed in $L$ if and only if $u\not\in\p^2$. Assume that $u\not\in \p$. So, $n1_K\in \p$ and $n$ is thus divisible by $p$. Let $n=m p^r$ with $m\in \N$ not divisible by $p$. If, on the one hand, $f \leq r$, then set $r=s+f$. So, $$f(x)=x^{m p^r}-u \equiv x^{m p^s p^f}-u^{p^f} \equiv (x^{m p^s}-u)^{p^f} \;\;(\mbox{mod}\;\p).$$
We also have,
\begin{align*}
f(x)&=(x^{m p^s})^{p^f} -u = (x^{m p^s}-u +u)^{p^f}-u\\ &=\sum_{k=0}^{p^f-1} \left(\begin{array}{c} p^f \\ k \\ \end{array} \right) u^k (x^{m p^s}-u)^{p^f -k} +u^{p^f} -u \\ &= H(x) (x^{m p^s}-u) + u^{p^f}-u,
\end{align*}
$H(x) \in R[x]$. If $\overline{x^{m p^s}-u} = \prod_{i=1}^t \overline{g_i}^{e_i}(x)$ is the monic irreducible factorization of $x^{m p^s} - u$ module $\p$, then $\overline{f}(x)=\prod_{i=1}^t \overline{g_i}^{e_i p^f}(x)$ is the monic irreducible factorization of $f(x)$ modulo $\p$. Letting $g_i(x) \in R[x]$ be a monic lift of $\overline{g_i}(x)$ for each $i$, it follows that the remainder upon the Euclidean division of $f(x)$ by each $g_i(x)$ is $u^{p^f}-u$. If, on the other hand, $r<f$, then set $f=s+r$. So, $$f(x)=x^{m p^r}-u \equiv x^{m p^r}-u^{p^f} \equiv (x^m -u^{p^s})^{p^r} \;\;(\mbox{mod}\;\p).$$
We also have,
\begin{align*}
f(x)&=(x^m)^{p^r} -u = (x^m -u^{p^s} +u^{p^s})^{p^r}-u\\ &=\sum_{k=0}^{p^r-1} \left(\begin{array}{c} p^r \\ k \\ \end{array} \right) u^k p^s (x^m -u^{p^s})^{p^r -k} +u^{p^f} -u \\ &= M(x) (x^m-u^{p^s}) + u^{p^f}-u,
\end{align*}
$M(x) \in R[x]$. If $\overline{x^m-u^{p^s}} = \prod_{i=1}^v \overline{h_i}^{l_i}(x)$ is the monic irreducible factorization of $x^m - u^{p^s}$ module $\p$, then $\overline{f}(x)=\prod_{i=1}^v \overline{h_i}^{l_i p^r}(x)$ is the monic irreducible factorization of $f(x)$ modulo $\p$. Letting $h_i(x) \in R[x]$ be a monic lift of $\overline{h_i}(x)$ for each $i$, it follows that the remainder upon the Euclidean division of $f(x)$ by each $h_i(x)$ is $u^{p^f}-u$. In either case, it follows from THEOREM \ref{DVR} that $R_\p[\al]$ is integrally closed if and
only if $\nu_\p(u^{p^f}-u)=1$.
\end{proof}

\noindent{\it Remark.} In case (ii) of THEOREM \ref{Ded 3}, any $r\in \N$ with $\nu_\p(u^{p^r} - u)=1$ suffices for the same conclusion to hold.\\

\begin{proof}({\bf COROLLARY \ref{Z}})
Just apply THEOREM \ref{Ded 3} noting that $f=1$.
\end{proof}





\section{{\bf APPLICATIONS AND EXAMPLES}}



\begin{cor}\label{cor 1} Keep the notations of THEOREM \ref{DVR} and let $f(x)=\sum_{i=0}^n a_i x^i \in R_\nu[x]$ be monic with $\nu(a_k) \geq 1$ for $1\leq k \leq n-1$, and $\nu(a_0) =1$. Let $L=K(\al)$ a field extension of $K$ with $\al$ a root of $f(x)$. Then $f(x)$ is irreducible over $K$ and $R_\nu[\al]$ is integrally closed in $L$.
\end{cor}

\begin{proof}
By the well-known Eisenstein's Criterion, $f(x)$ is irreducible over $R_\nu$. By Gauss' Lemma (see \cite[Proposition 9.4.5]{Dummit}), $f(x)$ is also irreducible over $K$ as well. As $\overline{f}(x)\equiv x^n \;(\mbox{mod}\;\m_\nu)$ and the remainder when dividing $f(x)$ by $x$ is $a_0$ with $\nu(a_0)=1$, it follows from THEOREM \ref{DVR} that $R_\nu[\al]$ is integrally closed in $L$.
\end{proof}

With the notation of THEOREM \ref{DVR}, assume that $f(x), \phi(x) \in R_\nu[x]$ are monic polynomials such that $\mbox{deg}(\phi(x))\leq \mbox{deg}(f(x))$ and $\overline{\phi}(x) \in k[x]$ is monic and irreducible. Let $f(x) =\sum_{i=0}^l a_i(x) \phi(x)^{l-i}$ be the $\phi$-adic expansion of $f(x)$. This entails, in particular, that, for each $i$, either $a_i(x)=0$ or $\mbox{deg}(a_i(x))<\mbox{deg}(\phi(x))$. We say that $f(x)$ is $(\phi, \nu)$-Eisenstein if $\nu(a_i(x)) \geq 1$ for $i=1, \dots, l-1$ and $\nu(a_l(x))=1$.

\begin{cor}\label{phi-Eis}
Consider the above notation and assumptions, and let $L=K(\al)$ be a field extension of $K$ with $\al$ a root of $f(x)$.  If $\overline{f}(x) \equiv \overline{\phi}(x)^l \; (\mbox{mod} \; \m_\nu)$ and $f(x)$ is $(\phi, \nu)$-Eisenstein, then $f(x)$ is irreducible over $R_\nu$ and $R_\nu[\al]$ is integrally closed in $L$.

\end{cor}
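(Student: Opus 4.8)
The plan is to deduce the conclusion from THEOREM \ref{DVR} once the irreducibility of $f$ is established; the genuine work is the irreducibility, since both THEOREM \ref{DVR} and LEMMA \ref{lemma} presuppose it and so cannot be invoked for $f$ at the outset (that would be circular). Granting irreducibility, the application is immediate. The hypothesis $\overline{f}(x) \equiv \overline{\phi}(x)^l \pmod{\m_\nu}$ says that the reduction of $f$ has the single monic irreducible factor $\overline{\phi}$, so in the notation of THEOREM \ref{DVR} we have $r = 1$, $\phi_1 = \phi$, and $l_1 = l$. The remainder $R_1(x)$ of the Euclidean division of $f(x)$ by $\phi(x)$ is exactly the $\phi$-adic term $a_l(x)$, because every other summand of the $\phi$-adic expansion $f = \sum_{i=0}^{l} a_i \phi^{l-i}$ is divisible by $\phi$ while $\deg a_l < \deg \phi$. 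Since $f$ is $(\phi,\nu)$-Eisenstein we have $\nu(R_1) = \nu(a_l) = 1$, so THEOREM \ref{DVR} applies: either $l = 1$ (and the condition ``$l_1 = 1$'' holds) or $l \geq 2$ (and ``$\nu(R_1) = 1$'' holds), and in both cases $R_\nu[\al]$ is integrally closed in $L$.

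For the irreducibility I would prove a Sch\"onemann-type statement by an elementary reduction argument. By Gauss' Lemma it suffices to rule out a factorization $f = gh$ with $g, h \in R_\nu[x]$ monic and non-constant. Reducing modulo $\m_\nu$ gives $\overline{g}\,\overline{h} = \overline{\phi}^l$ in the UFD $k_\nu[x]$; since $\overline{\phi}$ is irreducible and $g, h$ are monic non-constant, this forces $\overline{g} = \overline{\phi}^s$ and $\overline{h} = \overline{\phi}^t$ with $s, t \geq 1$ and $s + t = l$. Writing $g = Q_g \phi + R_g$ and $h = Q_h \phi + R_h$ for the Euclidean divisions by $\phi$, the divisibilities $\overline{\phi} \mid \overline{g}$ and $\overline{\phi} \mid \overline{h}$ together with $\deg R_g, \deg R_h < \deg \phi$ force $\overline{R_g} = \overline{R_h} = 0$, that is, $\nu(R_g) \geq 1$ and $\nu(R_h) \geq 1$.

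The contradiction then comes from tracking the $\phi$-adic constant term. From $f = gh$ one reads off $f \equiv R_g R_h \pmod{\phi}$, so $R_g R_h = c\phi + a_l$ for some $c \in R_\nu[x]$. Setting $R_g = \pi R_g'$, $R_h = \pi R_h'$, and $a_l = \pi a_l'$ (the last with $\nu(a_l') = 0$ and $\deg a_l' < \deg \phi$, using $\nu(a_l) = 1$), the relation becomes $c\phi = \pi^2 R_g' R_h' - \pi a_l'$; since $\phi$ is monic this yields $\pi \mid c$, say $c = \pi c'$, and cancelling one $\pi$ gives $c'\phi = \pi R_g' R_h' - a_l'$. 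Reducing modulo $\m_\nu$ leaves $\overline{c'}\,\overline{\phi} = -\overline{a_l'}$, whose right-hand side is nonzero of degree strictly less than $\deg \overline{\phi}$ --- impossible whether $\overline{c'}$ vanishes or not. Hence no such factorization exists and $f$ is irreducible over $R_\nu$, equivalently over $K$.

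The main obstacle is precisely this irreducibility step, and within it the delicate point is the bookkeeping of $\pi$-adic valuations: a first reduction only delivers $\nu(R_g), \nu(R_h) \geq 1$, so the clash with the sharp condition $\nu(a_l) = 1$ becomes visible only after dividing out one further power of $\pi$ and comparing degrees against $\overline{\phi}$. As an alternative I could argue valuation-theoretically, passing to the completion $\widehat{K}$ and showing, by balancing the terms of the $\phi$-adic expansion, that $\om(\phi(\beta)) = 1/l$ for every root $\beta$ of $f$; this forces any irreducible factor of $f$ over $\widehat{K}$ to have ramification index at least $l$ and residue degree at least $\deg \phi$, hence degree at least $\deg f$, so $f$ is irreducible. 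I find the reduction argument above cleaner and more self-contained, so I would present that one.
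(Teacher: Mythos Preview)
Your proposal is correct and follows essentially the same route as the paper: reduce integral closedness to THEOREM~\ref{DVR} via $R_1=a_l$ and $\nu(a_l)=1$, and for irreducibility assume $f=gh$ with $g,h\in R_\nu[x]$ monic non-constant, reduce modulo $\m_\nu$ to get $\overline{g}=\overline{\phi}^{\,s}$, $\overline{h}=\overline{\phi}^{\,t}$, and extract a contradiction from the $\phi$-adic constant terms. The paper phrases the last step using the full $\phi$-adic expansions of $g$ and $h$, asserting $a_l=g_{l_1}h_{l_2}$ (really: $a_l$ is the remainder of $g_{l_1}h_{l_2}$ upon division by the monic $\phi$) and concluding $\nu(a_l)\ge 2$; your version with $R_gR_h=c\phi+a_l$ and the $\pi$-bookkeeping is the same idea, carried out a touch more carefully.
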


\begin{proof}
Assume that $f(x)=g(x) h(x)$ for some monic $g(x), h(x)\in R_\nu[x]$. Then $\overline{g}(x) \equiv \overline{\phi}(x)^{l_1},\, \overline{h}(x) \equiv \overline{\phi}(x)^{l_2} \; (\mbox{mod}\; \m_\nu)$, with $l_1 + l_2 =l$. Note that $l_i\geq 1$ for $i=1,2$ as both $g(x)$ and $h(x)$ are monic. Let $g(x)=\sum_{i=0}^{l_1} g_i(x) \phi(x)^{l_1 -i}$ and $h(x) =\sum_{i=0}^{l_2} h_i(x) \phi(x)^{l_2 -i}$ be the $\phi$-adic expansions of $g(x)$ and $h(x)$, respectively. As $g(x)$ and $h(x)$ are monic, $g_0(x) = h_0(x)=1$. Since $l_i\geq 1$ and $\overline{g}(x) \equiv \overline{\phi}(x)^{l_1},\, \overline{h}(x) \equiv \overline{\phi}(x)^{l_2} \; (\mbox{mod}\; \m_\nu)$, $\nu(g_{l_1}(x))\geq 1$ and $\nu(h_{l_2}(x))\geq 1$. By the uniqueness of the $\phi$-adic expansion of $f(x)$, $a_l(x)=g_{l_1}(x)h_{l_2}(x)$. Thus, $\nu(a_l(x)) \geq 2$, a contradiction. Thus, $f(x)$ is irreducible over $R_\nu$. Hence, $f(x)$ is irreducible over $R_\nu$. Now, since $\nu(a_l(x))=1$, it follows from THEOREM \ref{DVR} that $R_\nu[\al]$ is integrally closed in $L$.
\end{proof}


\begin{cor}
Consider the above notation and assumptions, and let $f(x) = x^n +a \in R_\nu[x]$ be monic such that $\nu(a)=m \geq 1$ with $m$ and $n$ relatively prime. Let $L=K(\al)$ be a field extension of $K$ with $\al$ a root of $f(x)$ and $S$ the integral closure of $R_\nu$ in $L$. Then $f(x)$ is irreducible over $K$ and $\theta=\al^s/\pi^t$ generates a power basis for $S$ over $R_\nu$, where $s,t\in\Z$ such that $ms-nt=1$ .
\end{cor}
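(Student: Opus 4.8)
The plan is to avoid analyzing $f$ directly and instead transfer the whole problem to $\theta$, which turns out to satisfy an \emph{Eisenstein} polynomial, so that COROLLARY \ref{cor 1} does all the work at once. First I would observe that $\nu(a)=m\geq 1$ forces $a\neq 0$, hence $\al^n=-a\neq 0$ and $\al$ is invertible in $L$; this guarantees that $\theta=\al^s/\pi^t$ is a well-defined nonzero element of $L$ even when the integer $s$ (chosen with $ms-nt=1$) happens to be negative. Thus working with $\theta$ is legitimate before $f$ is even known to be irreducible.

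The computational heart of the argument is to write $a=\pi^m w$ with $w$ a unit of $R_\nu$ (possible precisely because $\nu(a)=m$) and compute, using $\al^n=-a$ and $ms-nt=1$,
$$\theta^n=\frac{\al^{sn}}{\pi^{tn}}=\frac{(-a)^s}{\pi^{tn}}=\frac{(-1)^s\pi^{ms}w^s}{\pi^{tn}}=(-1)^s w^s\,\pi^{ms-nt}=(-1)^s w^s\,\pi.$$
Hence $\theta$ is a root of the monic polynomial $g(x)=x^n-(-1)^s w^s\pi\in R_\nu[x]$, whose intermediate coefficients vanish and whose constant term has valuation $\nu(\pi)=1$. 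This is exactly the shape treated in COROLLARY \ref{cor 1}.

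I would then apply COROLLARY \ref{cor 1} to $g$ and $\theta$, which simultaneously gives that $g$ is irreducible over $K$ and that $R_\nu[\theta]$ is integrally closed in $K(\theta)$. Irreducibility of $g$ yields $[K(\theta):K]=\deg g=n$. Since $\theta\in L=K(\al)$ we have $K(\theta)\subseteq L$, while $[L:K]=[K(\al):K]\leq\deg f=n$; comparing the two shows $[L:K]=n$ and $K(\theta)=L$. In particular $f$, being monic of degree $n$ with root $\al$ generating a degree-$n$ extension, must be the minimal polynomial of $\al$ and is therefore irreducible over $K$, giving the first assertion.

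Finally, since $\theta$ is integral over $R_\nu$ (a root of the monic $g$), we have $R_\nu[\theta]\subseteq S$, and the two rings share the same integral closure in $L$; as $R_\nu[\theta]$ is integrally closed in $L=K(\theta)$, it equals $S$. Because $g$ is monic of degree $n=[L:K]$ and is the minimal polynomial of $\theta$, the set $\{1,\theta,\dots,\theta^{n-1}\}$ is an $R_\nu$-basis of $S=R_\nu[\theta]$, i.e.\ a power basis. The parts needing care are the displayed computation (the exponent bookkeeping $ms-nt=1$ and checking that $w$ is genuinely a unit), the handling of a possibly negative $s$ via invertibility of $\al$, and the final identification $R_\nu[\theta]=S$, which rests on $R_\nu[\theta]$ being integrally closed while sharing the integral closure of $R_\nu$ in $L$; everything else reduces to COROLLARY \ref{cor 1} and a degree count.
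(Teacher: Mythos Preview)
Your proof is correct and follows essentially the same strategy as the paper: compute $\theta^n$, see that it has valuation $1$, apply COROLLARY \ref{cor 1} to the resulting Eisenstein polynomial to get both irreducibility and integral closedness for $R_\nu[\theta]$, and then identify $K(\theta)$ with $L$. The only cosmetic difference is that the paper shows $K(\theta)=L$ by writing $\al=(\pi^{mt}\theta^m)/(-a)^t$ explicitly, whereas you use the degree comparison $[K(\theta):K]=n\geq [L:K]$ together with $K(\theta)\subseteq L$; both are equally short.
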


\begin{proof}
As $\theta^n=\al^{ns}/\pi^{nt}=a^s/\pi^{nt}$, $\nu(\theta^n)=ms-nt=1$. So, $F(x)=x^n-\theta^n \in R_\nu[x]$ is $\nu$-Eisenstein. Hence, by COROLLARY \ref{cor 1}, $F(x)$ is irreducible over $K$ and $R_\nu[\theta]$ is integrally closed in $K(\theta)$. But $\theta=\al^s/\pi^t \in L$. On the other hand, $\al=\al^{ms-nt}=\al^{ms}/\al^{nt}=(\pi^{mt}\theta^m)/a^t \in K(\theta)$. Thus, $L=K(\theta)$. This, on the one hand, implies that $R_\nu[\theta]$ is integrally closed in $L$ as claimed. On the other hand, as $f(x)$ and $F(x)$ are of the same degree and $F(x)$ is irreducible over $K$, $f(x)$ is irreducible over $K$ as well.
\end{proof}




\noindent{\it Example 1.}

In this example we use COROLLARY \ref{Ded} to give a much easier proof of the very well-know monogenity of $n$th cyclotomic number fields. By
\cite[p. 11]{Wash}, it suffices to prove the monogenity of $p^r$th cyclotomic number fields for rational primes $p$.
Let $K_{p^r}=\Q(\zeta)$ be the $p^r$th cyclotomic field with $\zeta=\zeta_{p^r}=\exp(2\pi i/p^r)$. It is known that the minimal polynomial of $\zeta$ is $$\Phi_{p^r}(x)=\frac{x^{p^r}-1}{x^{p^{r-1}}-1}=x^{\phi(p^r)}+x^{\phi(p^r)-p^{r-1}}+\cdots + x^{\phi(p^r)-(p-2)p^{r-1}}+1$$ and $p$ is the only rational prime whose square divides $\mbox{disc}(\Phi_{p^r})$ (in fact, $\mbox{disc}(\Phi_{p^r})$ is a power of $p$). Reducing $\Phi_{p^r}(x)=\cfrac{x^{p^r}-1}{x^{p^{r-1}}-1}$ modolo $p$ yields
$$\overline{\Phi_{p^r}}(x)\equiv \overline{(x-1)}^{\;\phi(p^r)} \;\;(\mbox{mod}\;p).$$
Let $Q(x), R(x)\in \Z[x]$ be, respectively, the quotient and remainder upon the Euclidean division of $\Phi_{p^r}(x)$ by $x-1$. Since $\mbox{deg}(x-1)=1$, $R(x)=a$ for some constant $a\in \Z$. Thus, $\Phi_{p^r}(x)=(x-1)Q(x)+a$. Evaluating both sides at 1 yields $p=\Phi_{p^r}(1)=a$. Since $\nu_p(p)=1$, it now follows from COROLLARY \ref{Ded} that $\Z_{K_{p^r}}=\Z[\zeta]$, which is what we need to show.\\

\noindent{\it Example 2.}

Let $R=\Z_K$, where $K$ is the quadratic number field defined by $x^2-3$. It
is well known that $R=\Z[\sqrt{3}]$ and $3R=\p^2$, where
$\p=\sqrt{3} R$. Let $(m,n)\in \Z\times \N$ be two integers such
$f(x)=x^n-m$ is irreducible over $K$ and $3$ divides $n$. Let
$L=K(\al)$, where  $\al$ is a root of $f(x)$. We show that $R[\al]$
is not integrally closed. If $3$ divides $m$, then as $3$ is a square in $R$, $m$ is
not square free in $R$. So, by THEOREM \ref{Ded 3}, $R[\al]$ is not
integrally closed. If $3$ does not divide $m$, then as $m^3\equiv m \, \mbox{mod } 3$,
$3$ divides $m^3-m$ in $R$ and, thus, $\nu_{\p}(m^3-m)\ge 2$. Again, by THEOREM \ref{Ded 3}, $R[\al]$ is not
integrally closed in this case either.


\end{document}